\documentclass[11pt,reqno]{amsart}
\usepackage{amsthm}
\usepackage{amssymb}
\usepackage{latexsym}
\usepackage{multicol}
\usepackage{verbatim,enumerate}
\usepackage{accents}
\usepackage[usenames]{color}
\usepackage{float}
\usepackage[utf8]{inputenc}
\usepackage{hyperref}
\usepackage{amsmath, amscd}
\usepackage{soul}
\usepackage{tikz}
\usetikzlibrary{matrix,arrows,decorations.pathmorphing}
\usepackage{dirtytalk}
\usepackage{indentfirst}
\usepackage{graphicx}
\usepackage{graphics}
\usepackage{subcaption}
\captionsetup[subfigure]{labelfont=rm}
\usepackage{tikz-cd}

\parindent 0pt

%
%
\advance\textwidth by 1.2in  \advance\oddsidemargin by -.6in \advance\evensidemargin by -.6in

\theoremstyle{plain}
\newtheorem*{prop}{Proposition}
\newtheorem{thm}{Theorem}
\newtheorem*{lem}{Lemma}


\theoremstyle{definition}
\newtheorem*{example}{Example}

\newtheorem*{rem}{Remark}


\theoremstyle{remark}


\newcommand{\lie}[1]{\mathfrak{#1}}

\def\a{\alpha}


\newcounter{cnt}
 \makeatletter
\def\mydggeometry{\makeatletter\dg@YGRID=1\dg@XGRID=20\unitlength=0.003pt\makeatother}
\makeatother \theoremstyle{remark}



\numberwithin{equation}{section}
\makeatletter
\def\section{\def\@secnumfont{\mdseries}\@startsection{section}{1}%
  \z@{.7\linespacing\@plus\linespacing}{.5\linespacing}%
  {\normalfont\scshape\centering}}
\def\subsection{\def\@secnumfont{\bfseries}\@startsection{subsection}{2}%
  {\parindent}{.5\linespacing\@plus.7\linespacing}{-.5em}%
  {\normalfont\bfseries}}
\makeatother


\begin{document}


\title[Identities of the multi-variate independence polynomials from heaps]{Identities of the multi-variate independence polynomials from heaps theory}

\author{Deniz Kus}
\address{University of Bochum, Faculty of Mathematics, Universit{\"a}tsstr. 150, Bochum 44801, 
Germany}
\email{deniz.kus@rub.de}
\author{Kartik Singh}
\address{Department of Combinatorics and Optimization, University of Waterloo Waterloo, Ontario N2L 3G1, Canada}
\email{k266singh@uwaterloo.ca}
\author{R. Venkatesh}
\address{Department of Mathematics, Indian Institute of Science, Bangalore 560012, India}
\email{rvenkat@iisc.ac.in}
\thanks{}

\begin{abstract}
We study and derive identities for the multi-variate independence polynomials from the perspective of heaps theory. Using the inversion formula and the combinatorics of partially commutative algebras we show how the multi-variate version of Godsil type identity as well as the fundamental identity can be obtained from weight preserving bijections. Finally, we obtain a new multi-variate identity involving connected bipartite subgraphs similar to the Christoffel-Darboux type identities obtained by Bencs. 
\end{abstract}

\maketitle
\section{Introduction}

Let $G$ be a finite simple connected graph with vertex set $V(G).$ A subset of $V(G)$ is said to be \textit{independent} if it does not include two adjacent vertices and
by convention we allow the empty subset to be independent. The \textit{multi-variate independence polynomial} of $G$ is defined as 
$$I(G, \mathbf{x}):=\sum_{S}(-1)^{|S|} \prod_{v\in S}x_v$$
where the sum runs over all independent subsets $S$ of $V(G)$. The aim of this article is to approach
certain identities for multi-variate independence polynomials using the inversion formula
from heaps theory.

\medskip
To explain our motivations and results we need some terminologies. One can associate a monoid called the \textit{Cartier–Foata monoid} to the graph $G$ (see \cite{CF69}). This monoid is generated by the vertices of $G$ and the defining relations are given by $uv=vu$ if $u, v\in V(G)$ and there is no edge between them. One can prove that the
Cartier–Foata monoid of $G$ is equivalent to the monoid of heaps with pieces in $V(G)$ and the concurrency relation is determined by $G$ (see \cite{V85}). 
The fundamental result of Viennot's general theory of heaps is the inversion lemma (see for example \cite{V85} and \cite[Theorem 2.1]{V92}) which
gives a closed formula for the generating function of heaps with all maximal pieces in some fixed subset.

\medskip
Even though heaps give a geometric interpretation of the elements of the Cartier--Foata monoid, we prefer to work with the Cartier--Foata monoid itself in this paper. Fix a subset $K$ of $V(G)$, and consider the set $\mathcal{P}^{\emptyset}_K(G)$ that consists of all elements in the monoid that can only end with one of the
$v$'s from $K$ (see Section \ref{sect2.1} for more details). We can assign a weight to each element of $\mathcal{P}^{\emptyset}_K(G)$ as follows: given 
$\mathbf{w}=u_1\cdots u_r\in \mathcal{P}^{\emptyset}_K(G)$, define $\mathrm{wt}(\mathbf{w})=\prod\limits_{i=1}^{r}x_{u_i}\in \mathbb{C}[x_v : v\in V(G)]$. Then the generating function of $\mathcal{P}^{\emptyset}_K(G)$ is simply given by 
$$\sum\limits_{\mathbf{w}\in \mathcal{P}^{\emptyset}_K(G)} \mathrm{wt}(\mathbf{w})= \frac{I(G-K, \mathbf{x})}{I(G, \mathbf{x})}$$
where $G-K$ is the graph obtained from $G$ by removing the vertices in $K.$
The motivation of this work comes from a Godsil's type identity that has been proved in \cite{Bencs} for one-variable independence polynomials; recall that the one variable independence polynomial is obtained by evaluating $x_v=-x$ for all $v\in V(G)$ in the multi-variate independence polynomial. Given a vertex $u\in G$, Bencs constructed a rooted (stable path) tree $(T, u')$ such that
\begin{equation}\label{Godsil}
    \frac{I(G-u,x)}{I(G,x)}=\frac{I(T-u', x)}{I(T,x)}
\end{equation}
Godsil's orginial identity was stated for matching polynomials \cite{Godsil1} and was one of the key ingredients in proving that the matching polynomial is real rooted. Furthermore, the importance of this identity is also highlighted in \cite{MSS} where the authors prove the existence of infinite families of regular bipartite Ramanujan graphs of every degree greater than $2$. 
It is not hard to prove the multi-variate version of Equation~\ref{Godsil} (the proof goes along the same lines as the proof of \cite[Theorem 2.3]{Bencs}). However, both sides of the multi-variate version of Equation~\ref{Godsil} are the generating functions of certain words from the Cartier–Foata monoid of $G$. More precisely, the left hand side of Equation~\ref{Godsil} corresponds to the generating function of $\mathcal{P}^{\emptyset}_{u}(G)$ and the 
right hand side corresponds to the generating function of $\mathcal{P}^{\emptyset}_{u'}(T)$. So, we have the following natural questions:
\begin{itemize}
    \item Is there any natural weight preserving bijective map from $\mathcal{P}^{\emptyset}_{u}(G)$ onto $\mathcal{P}^{\emptyset}_{u'}(T)$ that gives the 
     multi-variate version of Equation~\ref{Godsil}?
     \item Using the method of finding weight preserving bijections, is one able to give new proofs of existing identities, generalize them to the multi-variate case and obtain new identitites?
\end{itemize}
We answer the first question affirmatively in this paper. We will also use our approach to get more identities and prove existing identities for multi-variate independence polynomial of $G$. 
In particular we prove a new multi-variate identity Equation~\ref{identity2} involving connected bipartite subgraphs similar to the Christoffel-Darboux type identities
obtained by Bencs \cite{Bencs2}. This identity seems to be new in the literature.

\section{Independence polynomials and word decompositions}

\subsection{}\label{sect2.1} Let $G$ be a finite simple connected graph, i.e., $G$ contains no loops and multiple edges. The vertex set and edge set of $G$ are denoted as $V(G)$ and $E(G)$ respectively. We denote by $e(u, v)$ the edge between the vertices $u$ and $v$ of $G$.  For $u\in V(G)$, we denote by $N_G(u)$ the neighbourhood of $u$ in $G$, $d_G(u):=|N_G(u)|$ the degree of $u$ in $G$ and set $N_G[u]=N_G(u)\cup \{u\}.$
For a subset $S\subseteq V(G)$ we denote by $G[S]$ the subgraph of $G$ spanned by the vertices in $S$. Let $\mathcal{P}^{\emptyset}(G)$ denote the partially commutative monoid of $G$ which is generated by the elements of $V(G)$ modulo the relations
$$uv=vu\iff e(u, v)\notin E(G).$$
If $\mathcal{C}^{\emptyset}(G)$ denotes the commutative monoid generated by $V(G)$, we have a canonical monoid morphism $\pi_G: \mathcal{P}^{\emptyset}(G)\to \mathcal{C}^{\emptyset}(G)$. 
We set $\mathcal{P}(G):=\mathcal{P}^{\emptyset}(G)\backslash\{\mathrm{pt}\}$ where we think of the extra point in $\mathcal{P}^{\emptyset}(G)$ as the empty word and  introduce further
  $$\mathcal{P}_{v_1,\dots,v_r}(G)=\{\mathbf{w} \in \mathcal{P}(G): \mathrm{IA}(\mathbf{w})\subseteq\{v_1,\dots,v_r\}\},$$
 $$\mathcal{P}^c_{v_1,\dots,v_r}(G)=\{\mathbf{w} \in \mathcal{P}(G): \mathrm{IA}(\mathbf{w})=\{v_1,\dots,v_r\}\},$$
    $$\mathcal{P}^{\emptyset}_{v_1,\dots,v_r}(G)=\mathcal{P}_{v_1,\dots,v_r}(G)\sqcup \{\mathrm{pt}\},$$
i.e., $\mathcal{P}_{v_1,\dots,v_r}(G)$ consists of all words that can only end with one of the $v_i$'s. For a word $\mathbf{w}=v_1\cdots v_r\in \mathcal{P}(G)$ we write  $|\mathbf{w}|=r$ for the length of $\mathbf{w}$ and set $v(\mathbf{w})=|\{1\leq j\leq r: v_j=v\}|$ for a vertex $v\in V(G)$. The \textit{initial alphabet} of $\mathbf{w}$ is the multiset denoted by $\mathrm{IA_m}(\mathbf{w})$ and defined by $v\in \mathrm{IA_m}(\mathbf{w})$ (counted with multiplicities) if and only if  $\mathbf{w}=\mathbf{u}v$ for some $\mathbf{u}\in \mathcal{P}(G)$.
We denote the underlying set by $\mathrm{IA}(\mathbf{w})$ 
\begin{example}
Let us take $G$ to be the path graph $P_4$:\ \
\vspace{0,3cm}

\begin{center}
\begin{tikzpicture}
    \draw (1,2)-- (2,2);
    \draw (2,2)-- (3,2);
    \draw (3,2)-- (4,2);
 
    \fill  (1,2) circle (1.5pt);
    \draw (1,2.3) node {$1$};
    \fill  (2,2) circle (1.5pt);
    \draw (2,2.3) node {$2$};
    \fill  (3,2) circle (1.5pt);
    \draw (3,2.3) node {$3$};
    \fill  (4,2) circle (1.5pt);
    \draw (4,2.3) node {$4$};
    
\end{tikzpicture}
\end{center}
\vspace{0,3cm}

Take $\mathbf{w}=342111\in \mathcal{P}(G)$, then $$\text{ $|\mathbf{w}|=6$, $1(\mathbf{w})=3$, $2(\mathbf{w})=3(\mathbf{w})=4(\mathbf{w})=1$,
$\mathrm{IA_m}(\mathbf{w})=\{1, 1, 1, 4\}$ and $\mathrm{IA}(\mathbf{w})=\{1, 4\}$.} $$
\end{example} 

\subsection{} Given $\mathbf{w}\in\mathcal{P}_u(G)$, it has been shown in \cite[Proposition 4.3]{AKV17} that there exists unique words
	$\mathbf{w}_1, \dots, \mathbf{w}_{u(\mathbf{w})}\in \mathcal{P}(G)$ such that
 \begin{equation}\label{w12}\mathbf{w}=\mathbf{w}_1\cdots\mathbf{w}_{u(\mathbf{w})},\ \ \mathrm{IA}_m(\mathbf{w}_i)=\{u\} \ \text{for all}\ 1\le i\leq u(\mathbf{w}).\end{equation}
If $u(\mathbf{w})>1$, we refer to the decomposition above
as the \emph{initialalphabet}-decomposition or simply \emph{ia}-decomposition of $\mathbf{w}$. We shall define now the so-called neigborhood decomposition. We write  
$$N_G(u,\mathbf{w})=\{v\in N_G(u): v(\mathbf{w})>0\},\ \ d_G(u, \mathbf{w})=|N_G(u,\mathbf{w})|.$$ 
 \begin{prop}\label{mainlemma2}
 	Let $\mathbf{w}\in\mathcal{P}_u(G)$ with $u(\mathbf{w})=1$ and write $N_G(u,\mathbf{w})=\{u_1<\cdots<u_d\}$ where $d=d_G(u, \mathbf{w})$.
	Then there exist unique $\mathbf{w}_1, \dots, \mathbf{w}_{d}\in \mathcal{P}^{\emptyset}(G)$ such that:
 	\begin{enumerate}[(i)]
 		\item $\mathbf{w}=\mathbf{w}_1\cdots\mathbf{w}_{d}u$
 		\item If $\mathbf{w}_i\in \mathcal{P}(G)$, then  $\mathrm{IA}(\mathbf{w}_i)=\{u_i\}\ \text{for all}\ 1\le i\leq d$
 		\item  $u_i\notin \mathbf{w}_j \ \text{for all}\  i<j$.
 	\end{enumerate}
 	 \end{prop}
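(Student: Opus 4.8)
\emph{Reduction to a statement about $\mathbf{u}'$.} The plan is first to strip off the final letter and then to argue by induction on $d$. Since $\mathbf{w}\in\mathcal{P}_u(G)$ is a nonempty word we have $\mathrm{IA}(\mathbf{w})=\{u\}$, so $\mathbf{w}=\mathbf{u}'u$ for a unique $\mathbf{u}'\in\mathcal{P}^{\emptyset}(G)$ (right cancellation holds in a partially commutative monoid), and $u\notin\mathbf{u}'$ because $u(\mathbf{w})=1$. I would next record that $\mathrm{IA}(\mathbf{u}')\subseteq\{u_1,\dots,u_d\}$: if $v\in\mathrm{IA}(\mathbf{u}')$ with $v\notin N_G(u)$, write $\mathbf{u}'=\mathbf{c}v$; then $\mathbf{w}=\mathbf{c}vu=\mathbf{c}uv$, so $v\in\mathrm{IA}(\mathbf{w})$, contradicting $\mathrm{IA}(\mathbf{w})=\{u\}$; and since such a $v$ occurs in $\mathbf{w}$, it lies in $N_G(u,\mathbf{w})$. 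Thus the Proposition is equivalent to a factorization statement for $\mathbf{u}'$, which I will prove in the following slightly more general form.

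\emph{The inductive claim.} For every strictly increasing sequence $v_1<\dots<v_d$ of vertices of $N_G(u)$ with $\mathrm{IA}(\mathbf{u}')\subseteq\{v_1,\dots,v_d\}$, there exist unique $\mathbf{w}_1,\dots,\mathbf{w}_d\in\mathcal{P}^{\emptyset}(G)$ with $\mathbf{u}'=\mathbf{w}_1\cdots\mathbf{w}_d$, with $\mathrm{IA}(\mathbf{w}_i)=\{v_i\}$ whenever $\mathbf{w}_i\neq\mathrm{pt}$, and with $v_i\notin\mathbf{w}_j$ for $i<j$; the Proposition is the case $v_i=u_i$ after re-appending $u$. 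The base case $d=0$ forces $\mathbf{u}'=\mathrm{pt}$. For the inductive step, if $v_1\notin\mathbf{u}'$ I set $\mathbf{w}_1=\mathrm{pt}$; otherwise I take $\mathbf{w}_1$ to be a first factor of minimal length among all factorizations $\mathbf{u}'=\mathbf{a}\mathbf{b}$ with $v_1\notin\mathbf{b}$. A one-line minimality argument (if some $v\neq v_1$ lay in $\mathrm{IA}(\mathbf{w}_1)$, write $\mathbf{w}_1=\mathbf{w}_1'v$ and note $\mathbf{u}'=\mathbf{w}_1'\,(v\mathbf{b})$ is a shorter such factorization) shows $\mathrm{IA}(\mathbf{w}_1)=\{v_1\}$. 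Writing $\mathbf{u}'=\mathbf{w}_1\mathbf{r}$, one has $\mathrm{IA}(\mathbf{r})\subseteq\mathrm{IA}(\mathbf{u}')\subseteq\{v_1,\dots,v_d\}$ trivially and $v_1\notin\mathbf{r}$ by construction, so $\mathrm{IA}(\mathbf{r})\subseteq\{v_2,\dots,v_d\}$; applying the induction hypothesis to $\mathbf{r}$ and the sequence $v_2<\dots<v_d$ yields $\mathbf{w}_2,\dots,\mathbf{w}_d$, and properties (ii) and (iii) are inherited (the only new instances of (iii) involve $v_1$, which is absent from $\mathbf{r}$ and hence from every later factor).

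\emph{Uniqueness.} Here I would first show $\mathbf{w}_1$ is forced. If $\mathbf{u}'=\mathbf{w}_1\mathbf{r}=\mathbf{w}_1'\mathbf{r}'$ with $\mathbf{w}_1,\mathbf{w}_1'$ both having $\mathrm{IA}=\{v_1\}$ and $v_1\notin\mathbf{r},\mathbf{r}'$, the Levi (interpolation) property of trace monoids gives $\mathbf{w}_1=\mathbf{p}\mathbf{q}$, $\mathbf{w}_1'=\mathbf{p}\mathbf{s}$, $\mathbf{r}=\mathbf{s}\mathbf{t}$, $\mathbf{r}'=\mathbf{q}\mathbf{t}$ with every letter of $\mathbf{q}$ commuting with every letter of $\mathbf{s}$; then $v_1\notin\mathbf{q}$ (else $v_1\in\mathbf{r}'$) and $v_1\notin\mathbf{s}$ (else $v_1\in\mathbf{r}$), and if $\mathbf{q}\neq\mathrm{pt}$ then any letter of $\mathrm{IA}(\mathbf{q})$ lies in $\mathrm{IA}(\mathbf{w}_1)=\{v_1\}$, which is impossible; so $\mathbf{q}=\mathbf{s}=\mathrm{pt}$ and $\mathbf{w}_1=\mathbf{p}=\mathbf{w}_1'$. (Equivalently, in heap language $\mathbf{w}_1$ is the order ideal generated by the topmost occurrence of $v_1$ in the heap of $\mathbf{u}'$.) Left cancellation then gives $\mathbf{r}=\mathbf{r}'$, and uniqueness of $\mathbf{w}_2,\dots,\mathbf{w}_d$ follows from the induction hypothesis. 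The step requiring the most care is setting up this generalized claim correctly and checking that the remainder $\mathbf{r}$ genuinely satisfies the hypotheses with the truncated sequence $v_2<\dots<v_d$ (so that the induction closes); once the induction is framed this way, the cancellation and Levi manipulations are routine.
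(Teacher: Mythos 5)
Your proof is correct and follows essentially the same route as the paper's: induction on $d$, with the first factor $\mathbf{w}_1$ extracted greedily as the shortest prefix whose complementary suffix avoids $u_1$ (equivalently, the paper's choice of a maximal suffix not containing $u_1$), the minimality argument forcing $\mathrm{IA}(\mathbf{w}_1)=\{u_1\}$, and induction applied to the remainder. The differences are refinements rather than a new strategy: you strip the trailing $u$ first and state the induction as a slightly more general claim (which cleanly handles empty factors), and your uniqueness step via Levi's lemma for trace monoids is a more rigorous rendering of the paper's terse length-comparison argument.
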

 	\begin{proof}
We proceed by induction on $d$ where the $d=1$ case is obviously true. So we can assume that $d>1$. We choose $\mathbf{u}_1, \mathbf{u}_2$ such that $\mathbf{w} =\mathbf{u}_1\mathbf{u}_2$ 
and $|\mathbf{u}_2|$ is maximal with the property that $u_1\notin \mathbf{u}_2$. This forces $\mathrm{IA}(\mathbf{u}_1)=\{u_1\}$. 
Since $d_G(u,\mathbf{u}_2)<d_G(u,\mathbf{w})$ we can use  induction to get the required decomposition for $\mathbf{u}_2$. This gives the  decomposition for $\mathbf{w}$ with the properties $(i)-(iii)$ once we set $\mathbf{w}_1= \mathbf{u}_1$. 
The rest of the proof is concerned with the uniqueness. Assume that $\mathbf{w} = \mathbf{w}_1'\cdots\mathbf{w}_{d}'u$ is another decomposition
satisfying the conditions $(i)-(iii)$ of the lemma. Write $\mathbf{w}=\mathbf{w}_1'\mathbf{u}'$ then we have $u_1\notin \mathbf{u}'$. However, the choice of $\mathbf{w}_1$ implies $|\mathbf{w}_1|\leq |\mathbf{w}_1'|$ and $\mathbf{u}'$ is a subword of $\mathbf{u}_2$. 
This forces $|\mathbf{w}_1|=|\mathbf{w}_1'|$, since $\mathrm{IA}(\mathbf{w}_1')=\{u_1\}$. 
Hence $\mathbf{u}'=\mathbf{u}_2$ and $\mathbf{w}_1=\mathbf{w}_1'$. 
Now a simple induction argument completes the proof.
\end{proof}
For $\mathbf{w}\in\mathcal{P}_u(G)$ with $u(\mathbf{w})=1$, we refer to the decomposition of Proposition~\ref{mainlemma2} as the \emph{neighbourhood}-decomposition or simply \emph{nbd}-decomposition of $\mathbf{w}$.

\subsection{}
A subset $S$ of $V(G)$ is said to be \emph{independent} if there is no edge between the elements of $S$ in the graph $G$. We denote by $\mathcal{I}_G$ the set of independent subsets of $G$ and note that we have
$\emptyset, \{v\}\in \mathcal{I}_G$ for each $v\in V(G).$
The \textit{multi-variate independence polynomial} of $G$ is defined as 
$$I(G, \mathbf{x}):=\sum_{S\in \mathcal{I}_G}(-1)^{|S|} \prod_{v\in S}x_v$$
and we view it as an element in $\mathbb{C}[x_v : v\in V(G)]$, the polynomial algebra over $\mathbb{C}$ generated by the commuting variables $\{x_v : v\in V(G)\}$. The aim of this article is to approach certain identities for multi-variate independence polynomials using the inversion formula from heap theory. We need the following trivial identifications.
\begin{lem}\label{verytriv} Let $S\subseteq V(G)$ and $\{K_1,\dots, K_s\}$ be the set of non-empty independent subsets of the graph $G[S]$. 
\begin{enumerate}
\item We have a bijection
\begin{equation}\label{bijdis}\mathcal{P}^c_{K_1}(G)\ \dot\sqcup\cdots \dot\sqcup \ \mathcal{P}^c_{K_s}(G)\rightarrow \mathcal{P}_{S}(G)\end{equation}
\item For any independent subset $K\neq \emptyset$ of $S$ we have a bijection
$$\varphi_{K}:\mathcal{P}^c_{K}(G)\rightarrow \mathcal{P}^{\emptyset}_{N_G[K]}(G),\ \ \mathbf{w}\mapsto \frac{\mathbf{w}}{\prod_{y\in K}y}$$
\end{enumerate}
\begin{proof}
We first show that the left hand side of \eqref{bijdis} is a disjoint union. Let $\mathbf{w}\in \mathcal{P}^c_{K_1}(G)\sqcap \mathcal{P}^c_{K_2}(G)$ and $u\in K_1\backslash K_2$. Then we have $\mathbf{w}=\mathbf{w}'u$ and thus $u\in \mathrm{IA}(\mathbf{w})=K_2$ which is a contradiction. So the left hand side is disjoint. The identity map 
$$\mathrm{Id}_{K_i}:\mathcal{P}^c_{K_i}(G)\rightarrow \mathcal{P}_{S}(G)
$$
for all $i\in\{1,\dots,s\}$ induces the desired map \eqref{bijdis} which is clearly bijective. In order to show the second part we first note that the map is well defined. If $z\in \mathrm{IA}(\varphi_K(\mathbf{w}))$ but $z\notin N_G(K)$, then we would also have $z\in \mathrm{IA}(\mathbf{w})=K$. Hence $z\in  N_G[K]$.  The map is bijective because the inverse map is simply given by multiplication with $\prod_{y\in K}y$. 
\end{proof}
\end{lem}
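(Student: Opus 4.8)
The plan is to isolate one structural fact about the monoid $\mathcal{P}(G)$ and then let both parts of the lemma fall out by elementary bookkeeping. The fact is: for every non-empty $\mathbf{w}\in\mathcal{P}(G)$, the set $\mathrm{IA}(\mathbf{w})$ is a non-empty independent subset of $G$. Non-emptiness is clear since $\mathbf{w}$ has positive length; independence holds because if $u,v\in\mathrm{IA}(\mathbf{w})$ with $u\neq v$, then fixing a factorization $\mathbf{w}=\mathbf{u}v$, the letter $u$ must be commutable past the final $v$ in order to be realized as a last letter, so $e(u,v)\notin E(G)$. (This is in the spirit of \cite[Proposition 4.3]{AKV17}.) Since a subset of $S$ is independent in $G$ precisely when it is independent in the induced subgraph $G[S]$, the possible values of $\mathrm{IA}(\mathbf{w})$ for $\mathbf{w}\in\mathcal{P}_S(G)$ are exactly $K_1,\dots,K_s$.

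For part (1), each inclusion $\mathcal{P}^c_{K_i}(G)\hookrightarrow\mathcal{P}_S(G)$ is well defined because $K_i\subseteq S$, and together they induce the map in \eqref{bijdis}. The domain is a genuine disjoint union, since $\mathbf{w}\in\mathcal{P}^c_{K_i}(G)\cap\mathcal{P}^c_{K_j}(G)$ would give $K_i=\mathrm{IA}(\mathbf{w})=K_j$. Surjectivity is immediate from the preliminary fact: for $\mathbf{w}\in\mathcal{P}_S(G)$ the set $\mathrm{IA}(\mathbf{w})$ equals some $K_i$, so $\mathbf{w}\in\mathcal{P}^c_{K_i}(G)$; and injectivity follows because distinct components of the domain map onto words with distinct initial alphabets.

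For part (2), I would exhibit the candidate two-sided inverse $\psi_K(\mathbf{v})=\mathbf{v}\prod_{y\in K}y$ and verify three points. (a) \emph{$\varphi_K$ is well defined.} Since $K\subseteq\mathrm{IA}(\mathbf{w})$ and the elements of the independent set $K$ pairwise commute, one can move the last occurrence of each $y\in K$ to the right, one at a time, to obtain a factorization $\mathbf{w}=\mathbf{w}'\prod_{y\in K}y$; cancellativity of the monoid makes $\mathbf{w}'=\varphi_K(\mathbf{w})$ unique, and this also gives $\psi_K\circ\varphi_K=\mathrm{id}$, while $\varphi_K\circ\psi_K=\mathrm{id}$ is clear. (b) \emph{$\varphi_K$ lands in $\mathcal{P}^{\emptyset}_{N_G[K]}(G)$.} If $\varphi_K(\mathbf{w})=\mathrm{pt}$ there is nothing to check; otherwise, suppose $z\in\mathrm{IA}(\varphi_K(\mathbf{w}))$ with $z\notin N_G[K]$, so that $z\notin K$ and $z$ commutes with every vertex of $K$. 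Writing $\varphi_K(\mathbf{w})=\mathbf{w}''z$ then gives $\mathbf{w}=\mathbf{w}''\big(\prod_{y\in K}y\big)z$, whence $z\in\mathrm{IA}(\mathbf{w})=K$, a contradiction. (c) \emph{$\psi_K$ lands in $\mathcal{P}^c_K(G)$.} For $\mathbf{v}\in\mathcal{P}^{\emptyset}_{N_G[K]}(G)$ set $\mathbf{w}=\mathbf{v}\prod_{y\in K}y$; commuting each $y\in K$ to the end shows $K\subseteq\mathrm{IA}(\mathbf{w})$. Conversely, a last letter $z$ of $\mathbf{w}$ with $z\notin K$ does not occur in $\prod_{y\in K}y$, so all of its occurrences lie in the $\mathbf{v}$-block, and in order to be realized at the end of $\mathbf{w}$ the rightmost one must commute with every letter lying to its right; this simultaneously forces $z\notin N_G(K)$ and $z\in\mathrm{IA}(\mathbf{v})\subseteq N_G[K]$ (note $\mathbf{v}\neq\mathrm{pt}$, since $z$ occurs in it), hence $z\in K$, a contradiction. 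Therefore $\mathrm{IA}(\mathbf{w})=K$, and $\varphi_K,\psi_K$ are mutually inverse bijections.

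The one step that is not pure bookkeeping, and which I regard as the heart of the argument, is (a): the simultaneous right-division of $\mathbf{w}$ by $\prod_{y\in K}y$ when all of $K$ lies in the initial alphabet, together with the dual claim used in (c) that feeding an element of $\mathcal{P}^{\emptyset}_{N_G[K]}(G)$ into $\psi_K$ produces no spurious last letters. Both reduce to the same analysis of last occurrences in a word of $\mathcal{P}(G)$, and both are exactly where independence of $K$ and the relations $uv=vu\iff e(u,v)\notin E(G)$ defining the monoid are used. I would carry out (a) by induction on $|K|$, peeling off one vertex at a time in the style of the decomposition \eqref{w12} and of the proof of Proposition~\ref{mainlemma2}.
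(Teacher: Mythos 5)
Your proof is correct and follows essentially the same route as the paper: the inclusions $\mathcal{P}^c_{K_i}(G)\hookrightarrow\mathcal{P}_S(G)$ for part (1), and division/multiplication by $\prod_{y\in K}y$ as mutually inverse maps for part (2), with the same $N_G[K]$-membership argument for well-definedness. You are somewhat more careful than the paper, which leaves implicit the facts you spell out (that $\mathrm{IA}(\mathbf{w})$ is always independent, hence surjectivity in (1); that the right-division by $\prod_{y\in K}y$ exists; and that $\psi_K$ really lands in $\mathcal{P}^c_K(G)$), but these are refinements of the same argument rather than a different one.
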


\subsection{}\label{inver} The inversion lemma from heap theory \cite[Proposition 5.3]{V85} states that 
$$\frac{I(G-S,\mathbf{x})}{I(G,\mathbf{x})}=\sum_{\mathbf{w}=v_1\cdots
v_r\in \mathcal{P}_S^{\emptyset}(G)}x_{v_1}\cdots x_{v_r},\ \ S\subseteq V(G)$$
Using the inversion lemma one can derive certain well-known and possibly new identities of independence polynomials and extend them to the multi-variate version. For example, Lemma~\ref{verytriv} simply implies that (keeping the same notation)
\begin{equation}\label{fi1}I(G-S,\mathbf{x})-I(G,\mathbf{x})=\sum_{i=1}^s (\prod\limits_{v\in K_i} x_v) \ I(G-N_G[K_i],\mathbf{x})\end{equation}
which is known as the fundamental identity if $S$ is singleton. The importance of the identity can be seen for example in \cite{CS07} where the authors proved that independence polynomials of claw free graphs are real-rooted by using \eqref{fi1} when $S$ is a clique. The single variable version of the above identity is the main result of \cite{XH94}.
\section{Weight preserving bijection and Godsil's identity}
\subsection{} Here we recall the construction of a rooted tree associated with $(G, u)$, where $u\in V(G)$, which is important in Godsil type identity (originally it is stated for the matching polynomial; see \cite{Godsil93} and also \cite{Bencs}) which relates the independence polynomial of $G$ to that of the tree. The constructed tree is called a stable-path tree of $G$, for more details we refer the reader to \cite{Bencs} and for an example see Figure~\ref{fig:pt}. Let $V(G)=\{1, \dots, n\}$ be an enumeration of the vertices of $G$ and let $N_G(u)=\{u_{1}<\cdots <u_{d}\}$ where $u\in V(G)$ and $d:=d_G(u)$. For each vertex $u\in V(G)$ we will recursively associate a rooted tree $(T_{G}, u')$ and a surjective graph homomorphism 
$$\ell_G: V(T_G) \rightarrow  V(G),\ u'\mapsto u$$ as follows. 
If $d=0$ then $G$ is a single vertex and we set $T_{G}=\{u'\}$ as the tree with one vertex $u'$. If $d\ge 1$, we let $G_i$ be the connected component of $G[V(G)\backslash\{u,u_1,u_2, \dots,u_{i-1}\}]$ containing $u_i$ and we take the induced total ordering on $V(G_i)$ that comes from $V(G)$. Now we have by induction the family of rooted trees $(T_{G_i}, u'_i)$ and the graph homomorphisms 
$$\ell_{G_i}:V(T_{G_i})\rightarrow V(G_i),\ \ u_i'\mapsto u_i.$$
Finally we take the disjoint union of rooted trees $(T_{G_i}, u'_i)$ and a new vertex $u'$, and join 
the vertex $u'$ with the vertices $u'_i$, $1\le i\le d$. Clearly the  graph $(T_{G}, u')$ obtained in this way is a rooted tree.
Define the map $\ell_G: V(T_{G}) \rightarrow V(G)$ by $\ell_G(u')=u$ and $\ell_G(v)=\ell_{G_i}(v)$ if $v\in V(T_{G_i}).$
This is clearly a surjective graph homomorphism and the map $\ell_G$ induces a partial ordering on $V(T_{G})$ as follows:
for $v_1, v_2\in V(T_{G})$, we have
$$v_1\ge v_2 \iff \ell_G(v_1)\ge  \ell_G(v_2).$$ We extend this partial order to a total ordering on $V(T_{G})$. The extension of $\ell_G$ to $\mathcal{C}(T_{G})$ is again denoted as $\ell_G.$

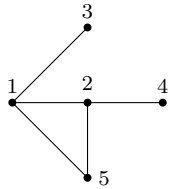
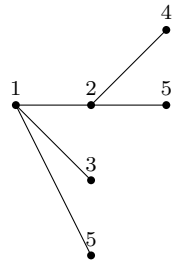
\begin{figure}[H]
\centering
  \begin{subfigure}[b]{0.5\linewidth}
    \centering
    \begin{tikzpicture}[line cap=round,line join=round,>=triangle 45,x=1.0cm,y=1.0cm]
      \clip(0.72,0.7) rectangle (3.48,3.64);
      \draw (1,2)-- (2,2);
      \draw (1,2)-- (2,1);
      \draw (2,2)-- (2,1);
      \draw (1,2)-- (2,3);
      \draw (2,2)-- (3,2);
      \begin{scriptsize}
      \fill  (1,2) circle (1.5pt);
      \draw (1,2.22) node {$1$};
      \fill  (2,2) circle (1.5pt);
      \draw (2,2.26) node {$2$};
      \fill  (2,3) circle (1.5pt);
      \draw (2,3.22) node {$3$};
      \fill  (2,1) circle (1.5pt);
      \draw (2.22,1) node {$5$};
      \fill  (3,2) circle (1.5pt);
      \draw (3,2.22) node {$4$};
      \end{scriptsize}
    \end{tikzpicture}
    \vspace{1cm}
    \caption{A graph $G$ with labeled vertices.} 
  \end{subfigure}%
\begin{subfigure}[b]{0.5\linewidth}
\begin{tikzpicture}[line cap=round,line join=round,>=triangle 45,x=1.0cm,y=1.0cm]
\clip(0.75,0.73) rectangle (4.48,4.6);
\draw (1,3)-- (2,3);
\draw (1,3)-- (2,2);
\draw (1,3)-- (2,1);
\draw (2,3)-- (3,4);
\draw (2,3)-- (3,3);
\begin{scriptsize}
\fill  (1,3) circle (1.5pt);
\draw (1,3.22) node {$1$};
\fill  (2,3) circle (1.5pt);
\draw (2,3.22) node {$2$};
\fill  (2,2) circle (1.5pt);
\draw (2,2.22) node {$3$};
\fill  (2,1) circle (1.5pt);
\draw (2,1.22) node {$5$};
\fill  (3,4) circle (1.5pt);
\draw (3,4.23) node {$4$};
\fill  (3,3) circle (1.5pt);
\draw (3,3.22) node {$5$};
\end{scriptsize}
\end{tikzpicture}

 \caption{The graph $T_{G,1}$.}
 \end{subfigure}
  \caption{A graph with its stable-path tree.}
  \label{fig:pt}
\end{figure}

   \subsection{}We freely use the notations that were developed in the earlier sections.
We now state and prove the following result.
 \begin{thm}\label{thmmain}
 Let $G$ be a finite, simple and connected graph. Then
there exists a bijection $\varphi_G:\mathcal{P}^{\emptyset}_{u}(G)\rightarrow \mathcal{P}^{\emptyset}_{u'}(T_{G})$ such that 	$|\varphi_G(\mathbf{w})|=|\mathbf{w}|$
and 
$$
\begin{tikzcd}
\mathcal{P}^{\emptyset}_{u}(G) \arrow{r}{\varphi_G} \arrow[swap]{d}{\pi_G} & \mathcal{P}^{\emptyset} _{u'}(T_{G}) \arrow{d}{\pi_{T_{G}}} \\
\mathcal{C}^{\emptyset}(G) & \arrow{l}{\ell_G} \mathcal{C}^{\emptyset}(T_G)
\end{tikzcd}
$$
 is a commutative diagram.
 \end{thm}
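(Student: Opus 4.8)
The plan is to construct the bijection $\varphi_G$ recursively, mirroring the recursive definition of the stable-path tree $(T_G,u')$, and to use the two decomposition results---the \emph{ia}-decomposition \eqref{w12} and the \emph{nbd}-decomposition of Proposition~\ref{mainlemma2}---as the combinatorial backbone. First I would set up the base case: if $d_G(u)=0$ then $G=\{u\}$, $T_G=\{u'\}$, and every word in $\mathcal{P}^{\emptyset}_u(G)$ is a power $u^k$, which I send to $u'^{k}$; length preservation and commutativity of the square are trivial since $\ell_G(u')=u$. For the inductive step, given $\mathbf{w}\in\mathcal{P}_u(G)$ I first apply the \emph{ia}-decomposition to write $\mathbf{w}=\mathbf{w}_1\cdots\mathbf{w}_{u(\mathbf{w})}$ with each $\mathbf{w}_i$ ending in $u$ and having $u$ appear exactly once; since the target monoid element $u'$ of $T_G$ will also admit such a decomposition, it suffices to build a length- and $\ell_G$-compatible bijection on the pieces with $u(\mathbf{w})=1$, and then concatenate. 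This reduces everything to the case $u(\mathbf{w})=1$.

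So assume $u(\mathbf{w})=1$ and apply Proposition~\ref{mainlemma2}: writing $N_G(u,\mathbf{w})=\{u_1<\cdots<u_d\}$ we get $\mathbf{w}=\mathbf{w}_1\cdots\mathbf{w}_d\,u$ with $\mathrm{IA}(\mathbf{w}_i)=\{u_i\}$ (when $\mathbf{w}_i\ne\mathrm{pt}$) and $u_i\notin\mathbf{w}_j$ for $i<j$. The condition $u_i\notin\mathbf{w}_j$ for $i<j$ is exactly what makes $\mathbf{w}_i$ live in the graph $G_i$ from the stable-path tree construction: $\mathbf{w}_i$ uses no letters from $\{u,u_1,\dots,u_{i-1}\}$, and (because $\mathbf{w}$ is connected as a word starting effectively from a neighbour of $u$) the letters of $\mathbf{w}_i$ that can appear must lie in the connected component $G_i$ of $G[V(G)\setminus\{u,u_1,\dots,u_{i-1}\}]$ containing $u_i$. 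One must check this component claim carefully---this is where a small connectivity argument for words in the Cartier--Foata monoid is needed. Granting it, each $\mathbf{w}_i\in\mathcal{P}^{\emptyset}_{u_i}(G_i)$, so by induction $\varphi_{G_i}(\mathbf{w}_i)\in\mathcal{P}^{\emptyset}_{u_i'}(T_{G_i})$ is defined, length-preserving, and $\ell_{G_i}$-compatible. I then define $\varphi_G(\mathbf{w}):=\varphi_{G_1}(\mathbf{w}_1)\cdots\varphi_{G_d}(\mathbf{w}_d)\,u'$, which is a legitimate element of $\mathcal{P}^{\emptyset}_{u'}(T_G)$ because in the tree $T_G$ the subtrees $T_{G_i}$ are attached to $u'$ and pairwise non-adjacent, so the letters of distinct $\varphi_{G_i}(\mathbf{w}_i)$ commute and the word can indeed end only in $u'$.

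The verification that $\varphi_G$ is well-defined (independent of representatives in the monoid), bijective, length-preserving, and makes the square commute then follows by assembling the inductive hypotheses: length preservation is additive over the decomposition and $|u|=|u'|=1$; the commutativity $\ell_G\circ\pi_{T_G}\circ\varphi_G=\pi_G$ follows since $\ell_G$ restricts to $\ell_{G_i}$ on $V(T_{G_i})$ and sends $u'\mapsto u$, so it transports the tree-side decomposition to the graph-side one termwise; and bijectivity is obtained by constructing the inverse in the same recursive fashion---given a word in $\mathcal{P}^{\emptyset}_{u'}(T_G)$, strip the final $u'$, decompose the rest along the subtrees $T_{G_i}$ (using the tree version of Proposition~\ref{mainlemma2}, which is automatic since in a tree the neighbourhood structure is rigid), apply $\varphi_{G_i}^{-1}$, and reassemble. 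The main obstacle I anticipate is the connectivity argument showing that $\mathbf{w}_i$ only involves vertices of the component $G_i$: one must argue that a word starting with $u_i$ and avoiding $\{u,u_1,\dots,u_{i-1}\}$ cannot ``jump'' to another component, which requires knowing that consecutive non-commuting letters in any shuffle of $\mathbf{w}_i$ trace out a connected path in $G[V(G)\setminus\{u,u_1,\dots,u_{i-1}\}]$; once this is pinned down, the rest is bookkeeping organized by the two decomposition lemmas.
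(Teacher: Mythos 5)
Your proposal follows essentially the same route as the paper: reduce to the case $u(\mathbf{w})=1$ via the \emph{ia}-decomposition, apply the \emph{nbd}-decomposition of Proposition~\ref{mainlemma2} to write $\mathbf{w}=\mathbf{w}_1\cdots\mathbf{w}_d u$ with $\mathbf{w}_i\in\mathcal{P}^{\emptyset}_{u_i}(G_i)$, define $\varphi_G$ recursively by $\varphi_{G_1}(\mathbf{w}_1)\cdots\varphi_{G_d}(\mathbf{w}_d)u'$, and build the inverse the same way on the tree side. The connectivity point you flag (that a word with $\mathrm{IA}(\mathbf{w}_i)=\{u_i\}$ avoiding $\{u,u_1,\dots,u_{i-1}\}$ must live in the component $G_i$) is exactly the step the paper dismisses as clear, and your sketch of it is correct, so the proposal is sound.
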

 
 \begin{proof}
 We recursively construct the map $\varphi_G$ and its inverse $\psi_G$. If $|V(G)|=1$, then we set $\varphi_G(u)=u'$ and $\psi_G(u')=u$. So assume that $|V(G)|>1$ and let $\varphi_H$ be the required map for all finite, connected graphs with $|V(H)|<|V(G)|$.
 We first consider the case $\mathbf{w}\in \mathcal{P}_{u}(G)$ with $u(\mathbf{w})=1$ and recall that we have the \emph{nbd}-decomposition $\mathbf{w}=\mathbf{w}_1\cdots\mathbf{w}_{d}u$ by Proposition~\ref{mainlemma2} where we abbreviate $d=d(u,\mathbf{w})$ in the rest of the proof. From the conditions $(ii)$ and $(iii)$ of Proposition~\ref{mainlemma2}, it is clear that $\mathbf{w}_i\in \mathcal{P}^{\emptyset}_{u_i}(G_i)$ for all $1\le i\le d$. 
 Now since $|V(G_i)|<|V(G)|$, we obtain by induction a family of bijective maps $\varphi_{G_i}:\mathcal{P}^{\emptyset}_{u_i}(G_i)\to \mathcal{P}^{\emptyset}_{u'_i}(T_{G_i})$ satisfying the required properties for all $1\le i\leq d$. We define
 \begin{equation}\label{firstdefn}
 \varphi_G(\mathbf{w})=\varphi_{G_1}(\mathbf{w}_1)\varphi_{G_2}(\mathbf{w}_2)\cdots\varphi_{G_{d}}(\mathbf{w}_{d})u'
 \end{equation}
 Since the decomposition $\mathbf{w}=\mathbf{w}_1\cdots\mathbf{w}_{d}u$ is unique, the above map is well-defined. Clearly the map $\varphi_G$ preserves the \emph{nbd}-decomposition, i.e., the decomposition in \ref{firstdefn} is the \emph{nbd}-decomposition of $ \varphi_G(\mathbf{w})$.

\medskip
Now we extend this map using the \textit{ia}-decomposition of $\mathbf{w}\in \mathcal{P}_{u}(G)$ with $u(\mathbf{w})>1$. We have $\mathbf{w} = \mathbf{w}_1\cdots\mathbf{w}_{u(\mathbf{w})}$ satisfying $\mathbf{w}_i \in \mathcal{P}_{u}(G)$ and $u(\mathbf{w}_i)=1$ for all 
 $1\le i\le u(\mathbf{w})$. We extend  $\varphi_G$ as follows:
$$ \varphi_G(\mathbf{w})=\varphi_G(\mathbf{w}_1)\varphi_G(\mathbf{w}_2)\cdots\varphi_G(\mathbf{w}_{u(\mathbf{w})})$$
Again $\varphi_G$ is well-defined by the uniqueness of the decomposition and $\varphi_G$ preserves the \emph{ia}-decomposition.
The fact that $|\varphi_G(\mathbf{w})|=|\mathbf{w}|$ holds and that the above diagram commutes follows from the fact that $\ell_G, \pi_G,\pi_{T_G}$ are all homomorphisms and the maps $\varphi_{G_i}$ also satisfy these properties. So it remains to construct the inverse map.

\medskip
In a similar way, we now define the inverse map $\psi_G$ using the maps $\psi_{G_i}=\varphi_{G_i}^{-1}$. 
Let $\mathbf{w}'\in \mathcal{P}_{u'}(T_{G})$ be such that $u'(\mathbf{w}')=1$. Again we have the \emph{nbd}-decomposition $\mathbf{w}' = \mathbf{w}_1'\cdots\mathbf{w}'_{d(u',\mathbf{w}')}u'$. We define 
$$\psi_G(\mathbf{w}')=\psi_{G_1}(\mathbf{w}_1')\psi_{G_2}(\mathbf{w}_2')\cdots\psi_{G_{d(u', \mathbf{w}')}}(\mathbf{w}'_{d(u',\mathbf{w}')})u$$
As before this is a well-defined map and preserves the \emph{nbd}-decomposition. Using this, it is easy to see that 
$\varphi_G\circ \psi_G(\mathbf{w})=\mathbf{w}$ and $\psi_G\circ\varphi_G(\mathbf{w}')=\mathbf{w}'$ for 
$\mathbf{w}\in \mathcal{P}_{u}(G), \mathbf{w}'\in \mathcal{P}_{u'}(T_{G})$ with $u(\mathbf{w})=u'(\mathbf{w}')=1$.

\medskip
If $\mathbf{w}'\in \mathcal{P}_{u'}(T_{G})$ with $u'(\mathbf{w}')>1$, we extend the map using the \emph{ia}-decomposition of $\mathbf{w}' = \mathbf{w}_1'\cdots\mathbf{w}'_{u'(\mathbf{w}')}$, namely we set
\begin{equation}
 		\psi_G(\mathbf{w}')=\psi_G(\mathbf{w}_1')\cdots \psi_G(\mathbf{w}'_{u'(\mathbf{w}')})
 	\end{equation} 	
As before this is a well-defined map and preserves the \emph{ia}-decomposition. Again we have $\varphi_G\circ\psi_G=\mathrm{Id}_{\mathcal{P}_{u'}(T_{G})}$ and $\psi_G\circ\varphi_G=\mathrm{Id}_{ \mathcal{P}_{u}(G)}$, proving that $\varphi_G$ is a bijection.	
 	\end{proof}
 \subsection{} The observation in Section~\ref{inver} together with Theorem~\ref{thmmain} immediately imply the multi-variate Godsil identity
 $$\frac{I(G-u,\mathbf{x})}{I(G,\mathbf{x})}=\frac{\ell_G(I(T_G-u',\mathbf{x}))}{\ell_G(I(T_G,\mathbf{x}))}$$
We refer also to \cite{LR06} for different genralizations of this identity. 
 \section{Bipartite graphs and positive sum identities}

\subsection{}Motivated by the Christoffel-Darboux type identities for the independence polynomial obtained in \cite{Bencs2} we would like to achieve a similar type identity or a refined version of it without the alternating sign and in a multi-variate version. Our approach will be the same by oberving underlying indexing sets. 
\medskip

Let $u, v$ be two distinct vertices of $G.$
Given a pair $(\mathbf{w}u,\mathbf{w}'v)\in \mathcal{P}_u(G)\times \mathcal{P}_v(G)$ and a shortest path $\mathbf{p}=v_1v_2\cdots v_k$ connecting $u=v_1$ with $v=v_k$ we define a bipartite graph $H=H_1\sqcup H_2$ by
$$H_1=\mathrm{IA}(\mathbf{w}\cdot v_2\cdot v_4\cdots ),\ \ H_2=\mathrm{IA}(\mathbf{w}'\cdot v_1\cdot v_3\cdots )$$

Note that $v\in H_1$ and $u\in H_2$ if $k$ is even and $u,v\in H_2$ otherwise. We consider the map
\begin{equation}\label{map1}\mathcal{P}_u(G)\times \mathcal{P}_v(G)\rightarrow \dot\bigsqcup_{H}\mathcal{P}^{\emptyset}_{Z_1(H)}(G)\times \mathcal{P}^{\emptyset}_{Z_2(H)}(G)\end{equation}
$$(\mathbf{w}u,\mathbf{w}'v)\rightarrow \left(\frac{\mathbf{w}\cdot v_2\cdot v_4\cdots }{\prod_{y\in H_1}y},\frac{\mathbf{w}'\cdot v_1\cdot v_3\cdots }{\prod_{y\in H_2}y}\right)$$
where the disjoint union runs over all connected bipartite subgraphs $H$ of $G$ containing the path $\mathbf{p}$ and satisfying
\begin{equation}\label{prop1}
    \begin{split}
     H_1\backslash\{v_2,v_4,\dots\}\subseteq N_G[u],\ \ H_2\backslash\{v_1,v_3,\dots\}\subseteq N_G[v], \\
    Z_1(H)=N_G[H_1\backslash\{v_2,v_4,\dots\}]\cup (N_G[H_1]\cap N_G[u]),\, \\
Z_2(H)=N_G[H_2\backslash\{v_1,v_3,\dots\}]\cup (N_G[H_2]\cap N_G[v]).
\end{split}
\end{equation}


\begin{prop}\label{pr12}
The map defined in \eqref{map1} is a bijection. 
\begin{proof}
We first show that the map is well-defined. Set $\mathbf{w}'=\frac{\mathbf{w}\cdot v_2\cdot v_4\cdots }{\prod_{z\in H_1}z}$, then we have 
$$\mathbf{w}\cdot v_2\cdot v_4\cdots=\mathbf{w}'\prod_{z\in H_1}z \,\, \text{and} \,\, \mathbf{w}=\mathbf{w}'\prod_{z\in H_1\backslash \{v_2, v_4, \cdots\}}z.$$ 
Assume that a letter $y$ is in the initial alphabet of the word 
$\mathbf{w}'$ which we assume to be non-empty. Suppose $y\in N_G[H_1\backslash \{v_2, v_4, \cdots\}]$ then we have $y\in Z_1(H)$. Otherwise 
$y\notin N_G[H_1\backslash \{v_2, v_4, \cdots\}]$ which implies $y\in IA(\mathbf{w})$, hence $y\in N_G[u].$ Suppose 
$y\in N_G(H_1)$, then we have $y\in Z_1(H)$.
Otherwise
$y\notin N_G(H_1)$, then $y\in IA(\mathbf{w}\cdot v_2\cdot v_4\cdots )=H_1$, again in this case we have $y\in Z_1(H)$.
Similar calculation shows that the initial alphabet of the second component lies in $Z_2(H)$. This shows that the map is well-defined. For the bijectivity we construct the inverse map. 
\medskip

Given a bipartite connected graph $H$ containing $\mathbf{p}$ (say $v_1,v_3,\dots \in H_2$) and satisfying \eqref{prop1}, we define  $$\mathcal{P}^{\emptyset}_{Z_1(H)}(G)\times \mathcal{P}^{\emptyset}_{Z_2(H)}(G)\mapsto \mathcal{P}_u(G)\times \mathcal{P}_v(G)$$
\begin{equation}\label{map2}(\mathbf{\tilde{w}},\mathbf{\tilde{w}}')\rightarrow \left(\mathbf{\tilde{w}}\prod_{y\in H_1\backslash\{v_2,v_4,\dots\}}y\  u,\mathbf{\tilde{w}}'\prod_{y\in H_2\backslash\{v_1,v_3,\dots\}} y\ v\right)\end{equation}
From \eqref{prop1} and the definition of $Z_i(H)$, $i=1,2$, we know that the above map is well defined. This map induces the inverse of \eqref{map1} since $$\mathrm{IA}(\tilde{\mathbf{w}}\prod_{y\in H_1}y)=H_1,\ \ \mathrm{IA}(\tilde{\mathbf{w}}'\prod_{y\in H_2}y)=H_2$$
\end{proof}
\end{prop}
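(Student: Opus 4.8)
The plan is to prove Proposition~\ref{pr12} by checking that the two maps \eqref{map1} and \eqref{map2} are mutually inverse, after first verifying that both are well-defined. The well-definedness of \eqref{map1} — i.e., that the initial alphabet of the first output word is contained in $Z_1(H)$ and likewise for the second — I would handle exactly as sketched: take a letter $y$ in the initial alphabet of the (nonempty) word $\mathbf{w}'$, factor $\mathbf{w}=\mathbf{w}'\prod_{z\in H_1\setminus\{v_2,v_4,\dots\}}z$, and do a case split on whether $y$ lies in $N_G[H_1\setminus\{v_2,v_4,\dots\}]$ and whether $y\in N_G(H_1)$; each of the three resulting cases lands $y$ in one of the two pieces of the union defining $Z_1(H)$. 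The symmetric argument handles the second component. For well-definedness of \eqref{map2}, the key point is that multiplying $\tilde{\mathbf{w}}$ on the right by $\prod_{y\in H_1\setminus\{v_2,v_4,\dots\}}y\, u$ produces a word in $\mathcal{P}_u(G)$: one must know that $u\notin N_G[H_1\setminus\{v_2,v_4,\dots\}]$ is not required — rather, one appends $u$ as the final letter, so the word ends in $u$ and hence lies in $\mathcal{P}_u(G)$ provided $\tilde{\mathbf{w}}\prod y$ is a legitimate element of the monoid; this is automatic.

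Next I would verify the composite \eqref{map2}$\circ$\eqref{map1} is the identity. Starting from $(\mathbf{w}u,\mathbf{w}'v)$ with path $\mathbf{p}$, form $H_1=\mathrm{IA}(\mathbf{w}\cdot v_2\cdot v_4\cdots)$ and $H_2=\mathrm{IA}(\mathbf{w}'\cdot v_1\cdot v_3\cdots)$; one must check $H=H_1\sqcup H_2$ is a connected bipartite subgraph containing $\mathbf{p}$ and satisfying \eqref{prop1}, so that it is an index of the disjoint union on the right-hand side. Connectedness and the containment of $\mathbf{p}$ follow because the path letters $v_i$ all appear in the words $\mathbf{w}\cdot v_2\cdot v_4\cdots$ or $\mathbf{w}'\cdot v_1\cdot v_3\cdots$ by construction, and any letter of an initial alphabet that is not a path letter must be adjacent to $u$ (resp. $v$) — this gives the inclusions $H_1\setminus\{v_2,v_4,\dots\}\subseteq N_G[u]$ and $H_2\setminus\{v_1,v_3,\dots\}\subseteq N_G[v]$ from \eqref{prop1}. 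Bipartiteness comes from the two-colouring $H_1$ versus $H_2$ together with the fact that within $\mathrm{IA}$ of a single word there are no edges between distinct initial letters (a standard fact about the Cartier--Foata monoid: two letters in the initial alphabet of the same word commute, hence are non-adjacent). Granting that $H$ is a valid index, applying \eqref{map2} multiplies the first coordinate $\tilde{\mathbf{w}}=\mathbf{w}\cdot v_2\cdot v_4\cdots/\prod_{y\in H_1}y$ back by $\prod_{y\in H_1\setminus\{v_2,v_4,\dots\}}y\,u$; since $\prod_{y\in H_1}y = \big(\prod_{y\in H_1\setminus\{v_2,v_4,\dots\}}y\big)\cdot v_2v_4\cdots$ in $\mathcal{C}^{\emptyset}$ and the factor $v_2v_4\cdots$ is precisely what was appended to $\mathbf{w}$ to begin with, one recovers $\mathbf{w}u$. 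The second coordinate is symmetric.

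For the other composite \eqref{map1}$\circ$\eqref{map2}, start with $H$ satisfying \eqref{prop1} and $(\tilde{\mathbf{w}},\tilde{\mathbf{w}}')\in\mathcal{P}^{\emptyset}_{Z_1(H)}(G)\times\mathcal{P}^{\emptyset}_{Z_2(H)}(G)$; apply \eqref{map2} to land on $(\mathbf{w}u,\mathbf{w}'v)$ with $\mathbf{w}=\tilde{\mathbf{w}}\prod_{y\in H_1\setminus\{v_2,v_4,\dots\}}y$. Now feed this pair (with the same path $\mathbf{p}$) into \eqref{map1}. The crucial identity, already isolated in the excerpt, is $\mathrm{IA}(\tilde{\mathbf{w}}\prod_{y\in H_1}y)=H_1$ and $\mathrm{IA}(\tilde{\mathbf{w}}'\prod_{y\in H_2}y)=H_2$: once this is known, the bipartite graph reconstructed from $(\mathbf{w}u,\mathbf{w}'v)$ is exactly $H$, and dividing by $\prod_{y\in H_1}y$ returns $\tilde{\mathbf{w}}$. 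The inclusion $\mathrm{IA}(\cdots)\subseteq H_1$ there uses that $\mathrm{IA}(\tilde{\mathbf{w}})\subseteq Z_1(H)$ together with the description of $Z_1(H)$ in \eqref{prop1}; the reverse inclusion $H_1\subseteq\mathrm{IA}(\cdots)$ uses that multiplying by $\prod_{y\in H_1}y$ — including the path letters $v_2,v_4,\dots$ — forces every element of $H_1$ into the initial alphabet, since $H_1$ is an independent set in $G$ (by bipartiteness, $H_1$ has no internal edges) and so all its letters pairwise commute and can each be pulled to the end.

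The main obstacle I anticipate is the careful bookkeeping around $Z_1(H)$ and $Z_2(H)$: showing that the letters one strips off or glues back on interact correctly with both the "path" letters $v_i$ and the neighbourhood constraints, and in particular that the reconstructed $H$ from a pair produced by \eqref{map2} is the \emph{same} $H$ one started with rather than some larger or smaller bipartite graph. This amounts to verifying that the defining formulas for $Z_1(H)$, $Z_2(H)$ in \eqref{prop1} are exactly tight — neither so small that \eqref{map1} fails to be well-defined (handled in the first paragraph) nor so large that two different indices $H\ne H'$ could have overlapping images, which would break disjointness of the union. Disjointness is forced precisely by the identity $\mathrm{IA}(\tilde{\mathbf{w}}\prod_{y\in H_i}y)=H_i$, which pins down $H$ from the image pair, so once that identity is established the whole proposition falls into place.
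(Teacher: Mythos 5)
Your overall strategy is the same as the paper's: check well-definedness of \eqref{map1} by the case analysis on a letter $y$ in the initial alphabet of the quotient word, exhibit \eqref{map2} as the inverse, and reduce everything to the identity $\mathrm{IA}(\tilde{\mathbf{w}}\prod_{y\in H_i}y)=H_i$. You are in fact more careful than the paper at several points it leaves implicit: that the constructed $H$ really is a connected bipartite subgraph satisfying \eqref{prop1} (so that \eqref{map1} lands in the stated codomain --- your observation that $H_1$ and $H_2$ are independent sets because initial alphabets consist of pairwise commuting, hence non-adjacent, letters is the right one), and that the same $\mathrm{IA}$ identity forces disjointness of the union over $H$. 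These are worthwhile additions rather than deviations.

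There is one step where your justification is wrong even though the conclusion is correct: the well-definedness of \eqref{map2}. You claim it is ``automatic'' because the output word ``ends in $u$''. But $\mathcal{P}_u(G)$ is defined by $\mathrm{IA}(\mathbf{w})\subseteq\{u\}$, i.e.\ the word can \emph{only} end in $u$; merely writing $u$ as the last letter does not prevent another letter from commuting past it (for $w\notin N_G[u]$ one has $wu=uw$ with $\mathrm{IA}=\{u,w\}$, so $wu\notin\mathcal{P}_u(G)$). The check genuinely uses the hypotheses: a letter $y$ surviving into the initial alphabet of $\tilde{\mathbf{w}}\prod_{z\in H_1\backslash\{v_2,v_4,\dots\}}z$ other than the appended $z$'s must lie in $\mathrm{IA}(\tilde{\mathbf{w}})\subseteq Z_1(H)$ but outside $N_G[H_1\backslash\{v_2,v_4,\dots\}]$, hence in $N_G[H_1]\cap N_G[u]\subseteq N_G[u]$; combined with $H_1\backslash\{v_2,v_4,\dots\}\subseteq N_G[u]$ from \eqref{prop1}, every letter that could precede the final $u$ in the initial alphabet is adjacent to $u$ or equal to it, giving $\mathrm{IA}\bigl(\tilde{\mathbf{w}}\prod_{z}z\cdot u\bigr)\subseteq\{u\}$. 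This is precisely the point the paper disposes of with ``From \eqref{prop1} and the definition of $Z_i(H)$ \dots the map is well defined,'' and it is where the exact shape of $Z_1(H)$ is used; your argument as written would ``prove'' that appending $u$ to an arbitrary word lands in $\mathcal{P}_u(G)$, which is false.
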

\subsection{} As an immediate consequence of Proposition~\ref{pr12} we obtain the following identity 
\begin{align}
\notag &\left(\frac{I(G-u,\mathbf{x})}{I(G,\mathbf{x})}-1\right)\left(\frac{I(G-v,\mathbf{y})}{I(G,\mathbf{y})}-1\right)&\\&\label{identity2}= \sum_{H} \prod_{\substack{w\in H_1\backslash\{v_2,v_4,\dots\}\\ w'\in H_2\backslash\{v_1,v_3,\dots\}}} x_w y_{w'} x_u y_v \left(\frac{I(G-Z_1(H),\mathbf{x})}{I(G,\mathbf{x})}\right)\left(\frac{I(G-Z_2(H),\mathbf{y})}{I(G,\mathbf{y})}\right)
 \end{align}
where the sum runs over all connected bipartite subgraphs $H$ of $G$ containing the path $\mathbf{p}$ and satisfying \eqref{prop1} (by convention we denote always by $H_2$ the part which contains $v_1,v_3,\dots$). Using $$I(G-u,\mathbf{x})-I(G,\mathbf{x})=-x_u\frac{\partial  I(G,\mathbf{x})}{\partial{x_u}}$$ we can rewrite \eqref{identity2} as follows
\begin{align}
\notag  \frac{\partial I(G,\mathbf{x})}{\partial x_u}\frac{\partial I(G,\mathbf{y})}{\partial y_v} = \sum_{H} \prod_{\substack{w\in H_1\backslash\{v_2,v_4,\dots\}\\ w'\in H_2\backslash\{v_1,v_3,\dots\}}} x_w y_{w'} \, I(G-Z_1(H),\mathbf{x})I(G-Z_2(H),\mathbf{y})
 \end{align}
where the sum runs over the same index set as before. 

\begin{rem} If there is an edge between $u$ and $v$, then the left hand side of the above identity becomes (after evaluating $\mathbf{x}=\mathbf{y}$)
$$\frac{I(G-u,\mathbf{x})}{I(G,\mathbf{x})}\frac{I(G-v,\mathbf{x})}{I(G,\mathbf{x})}-\frac{I(G-\{u, v\}, \mathbf{x})}{I(G,\mathbf{x})}.$$ This part also appeared for example in Gutman's identity for trees (see \cite{Gutman}) and for general graphs in \cite{Bencs2}.
\end{rem}

\subsection{}
We will now see some examples that explain our results. 

\begin{example}
Let us consider the path graph $P_4$ (see  Figure~\ref{fig:path}) and take $u=2$ and $v=3.$
The connected bipartite subgraphs of $P_4$ containing $u, v$ are given in Figure~\ref{fig:path2}.
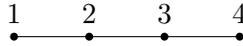
\begin{figure}[H]
 \begin{tikzpicture}
    \draw (1,2)-- (2,2);
    \draw (2,2)-- (3,2);
    \draw (3,2)-- (4,2);
 
    \fill  (1,2) circle (1.5pt);
    \draw (1,2.3) node {$1$};
    \fill  (2,2) circle (1.5pt);
    \draw (2,2.3) node {$2$};
    \fill  (3,2) circle (1.5pt);
    \draw (3,2.3) node {$3$};
    \fill  (4,2) circle (1.5pt);
    \draw (4,2.3) node {$4$};
    
\end{tikzpicture}

    \caption{Path graph $P_4$}
 \label{fig:path}
\end{figure}

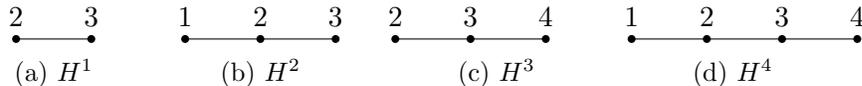
\begin{figure}[H]
\centering
\begin{subfigure}[b]{0.15\linewidth}
     \centering
  \begin{tikzpicture}
    \draw (1,2)-- (2,2);
   
    \fill  (1,2) circle (1.5pt);
    \draw (1,2.3) node {$2$};
    \fill  (2,2) circle (1.5pt);
    \draw (2,2.3) node {$3$};
    
    \end{tikzpicture}
     \caption{$H^1$}
     \end{subfigure}%
  \begin{subfigure}[b]{0.2\linewidth}
  \centering
  \begin{tikzpicture}
    \draw (1,2)-- (2,2);
    \draw (2,2)-- (3,2);
    
    \fill  (1,2) circle (1.5pt);
    \draw (1,2.3) node {$1$};
    \fill  (2,2) circle (1.5pt);
    \draw (2,2.3) node {$2$};
    \fill  (3,2) circle (1.5pt);
    \draw (3,2.3) node {$3$};
    
    \end{tikzpicture}
     \caption{$H^2$}
     \end{subfigure}%
 \begin{subfigure}[b]{0.2\linewidth}
  \begin{tikzpicture}
    \draw (1,2)-- (2,2);
    \draw (2,2)-- (3,2);
    
    \fill  (1,2) circle (1.5pt);
    \draw (1,2.3) node {$2$};
    \fill  (2,2) circle (1.5pt);
    \draw (2,2.3) node {$3$};
    \fill  (3,2) circle (1.5pt);
    \draw (3,2.3) node {$4$};
    
    \end{tikzpicture}
     \caption{$H^3$}
     \end{subfigure}%
    \begin{subfigure}[b]{0.2\linewidth}
     \begin{tikzpicture}
    \draw (1,2)-- (2,2);
    \draw (2,2)-- (3,2);
    \draw (3,2)-- (4,2);
   
    \fill  (1,2) circle (1.5pt);
    \draw (1,2.3) node {$1$};
    \fill  (2,2) circle (1.5pt);
    \draw (2,2.3) node {$2$};
    \fill  (3,2) circle (1.5pt);
    \draw (3,2.3) node {$3$};
    \fill  (4,2) circle (1.5pt);
    \draw (4,2.3) node {$4$};
   
    \end{tikzpicture}
     \caption{$H^4$}
    \end{subfigure}%
    \caption{Connected bipartite subgraphs of $P_4$ containing $2$ and $3$}
    \label{fig:path2}
\end{figure}

\medskip

In this case we can rewrite the equation \ref{identity2} as follows:
\begin{align}\notag &\left(I(G-u,\mathbf{x})-I(G,\mathbf{x})\right)\left(I(G-v,\mathbf{y})-I(G,\mathbf{y})\right)&\\&\label{identity3}=\sum_{H} \prod_{\substack{w\in H_1\\ w'\in H_2}} x_w y_{w'} I(G-Z_1(H),\mathbf{x})I(G-Z_2(H),\mathbf{y}).\end{align}
It is easy to see that $$I(G,\mathbf{x})=1-x_1-x_2-x_3-x_4+x_1x_3+x_1x_4+x_2x_4$$ $$I(G-u,\mathbf{x})=1-x_1-x_3-x_4+x_1x_3+x_1x_4, \text{and}$$ 
$$I(G-v,\mathbf{x})=1-y_1-y_2-y_4+y_1y_4+y_2y_4.$$
This gives $$(I(G-u,\mathbf{x})-I(G,\mathbf{x}))(I(G-v,\mathbf{y})-I(G,\mathbf{y}))=x_2y_3(1-x_4)(1-y_1).$$ On the other hand, we have the parts arising from the bipartite parts which we list now
\begin{enumerate}
    \item[(a)] In this case we have $$H_1^1=\{3\},\ H_2^1=\{2\},\  Z_1(H^1)=\{2, 3\}=Z_2(H^1)$$ and 
    $$I(G-\{2, 3\},\mathbf{x})=1-x_1-x_4+x_1x_4$$
     \item[(b)] In this case we have 
     $$H_1^2=\{1, 3\},\ H_2^2=\{2\},\ Z_1(H^2)=\{1, 2, 3\},\ Z_2(H^2)=\{2, 3\}$$ 
     and
     $$I(G-\{1, 2, 3\},\mathbf{x})=1-x_4, \ \ I(G-\{2, 3\},\mathbf{y})=1-y_1-y_4+y_1y_4;$$
      \item[(c)] In this case we have 
      $$H_1^3=\{3\},\ H_2^3=\{2, 4\},\ Z_1(H^3)=\{2, 3\},\ Z_2(H^3)=\{2, 3, 4\}$$ 
      and 
      $$I(G-\{2, 3\},\mathbf{x})=1-x_1-x_4+x_1x_4, \ \ I(G-\{2, 3, 4\},\mathbf{y})=1-y_1$$
       \item[(d)] In this case we have 
       $$H_1^4=\{1, 3\},\ H_2^4=\{2, 4\},\ Z_1(H^4)=\{1, 2, 3\},\ Z_2(H^4)=\{2, 3, 4\}$$ 
       and
       $$I(G-\{1, 2, 3\},\mathbf{x})=1-x_4, \ \ I(G-\{2, 3, 4\},\mathbf{y})=1-y_1.$$
\end{enumerate}
If we simplify the RHS of Equation \ref{identity3} becomes $x_2y_3(1-x_4)(1-y_1)$ which is same as the LHS of Equation \ref{identity3}.

\end{example}

\begin{example}
Let us consider the path graph $P_4$ (see  Figure~\ref{fig:path}) and take $u=1$ and $v=4.$
The only connected bipartite subgraphs of $P_4$ containing $u, v$ is $P_4$ itself. In this case, 
we have  $$I(G-u,\mathbf{x})=1-x_2-x_3-x_4+x_2x_4\ \text{and}\ I(G-v,\mathbf{y})=1-y_1-y_2-y_3+y_1y_3.$$
The LHS of Equation \ref{identity3} is equal to 
$$x_1y_4(1-x_3-x_4)(1-y_1-y_2).$$
On the other hand, we have $H_1=\{2, 4\}$, $H_2=\{1, 3\}$, $Z_1(H)=\{1, 2\}$ and $Z_2(H)=\{3, 4\}$. This implies the 
RHS of Equation \ref{identity3} is equal to $$x_1y_4(1-x_3-x_4)(1-y_1-y_2),$$
which is same as the LHS of Equation \ref{identity3}.
\end{example}

\bibliographystyle{plain}

\begin{thebibliography}{10}

\bibitem{AKV17}
G.~Arunkumar, Deniz Kus, and R.~Venkatesh.
\newblock Root multiplicities for {B}orcherds algebras and graph coloring.
\newblock {\em J. Algebra}, 499:538--569, 2018.

\bibitem{Bencs2}
Ferenc Bencs.
\newblock Christoffel-{D}arboux type identities for the independence
  polynomial.
\newblock {\em Combin. Probab. Comput.}, 27(5):716--724, 2018.

\bibitem{Bencs}
Ferenc Bencs.
\newblock On trees with real-rooted independence polynomial.
\newblock {\em Discrete Math.}, 341(12):3321--3330, 2018.

\bibitem{V92}
Mireille Bousquet-M\'{e}lou and Xavier~G\'{e}rard Viennot.
\newblock Empilements de segments et {$q$}-\'{e}num\'{e}ration de polyominos
  convexes dirig\'{e}s.
\newblock {\em J. Combin. Theory Ser. A}, 60(2):196--224, 1992.

\bibitem{CF69}
P.~Cartier and D.~Foata.
\newblock {\em Probl\`emes combinatoires de commutation et r\'{e}arrangements}.
\newblock Lecture Notes in Mathematics, No. 85. Springer-Verlag, Berlin-New
  York, 1969.

\bibitem{CS07}
Maria Chudnovsky and Paul Seymour.
\newblock The roots of the independence polynomial of a clawfree graph.
\newblock {\em J. Combin. Theory Ser. B}, 97(3):350--357, 2007.

\bibitem{Godsil1}
C.~D. Godsil.
\newblock Matchings and walks in graphs.
\newblock {\em J. Graph Theory}, 5(3):285--297, 1981.

\bibitem{Godsil93}
C.~D. Godsil.
\newblock {\em Algebraic combinatorics}.
\newblock Chapman and Hall Mathematics Series. Chapman \& Hall, New York, 1993.

\bibitem{Gutman}
Ivan Gutman.
\newblock An identity for the independence polynomials of trees.
\newblock {\em Publ. Inst. Math. (Beograd) (N.S.)}, 50(64):19--23, 1991.

\bibitem{XH94}
Cornelis Hoede and Xue~Liang Li.
\newblock Clique polynomials and independent set polynomials of graphs.
\newblock {\em Discrete Math.}, 125(1-3):219--228, 1994.
\newblock 13th British Combinatorial Conference (Guildford, 1991).

\bibitem{LR06}
Jonathan~D. Leake and Nick~R. Ryder.
\newblock Generalizations of the matching polynomial to the multivariate
  independence polynomial.
\newblock {\em Algebr. Comb.}, 2(5):781--802, 2019.

\bibitem{MSS}
Adam~W. Marcus, Daniel~A. Spielman, and Nikhil Srivastava.
\newblock Interlacing families {I}: {B}ipartite {R}amanujan graphs of all
  degrees.
\newblock {\em Ann. of Math. (2)}, 182(1):307--325, 2015.

\bibitem{V85}
G\'{e}rard~Xavier Viennot.
\newblock Heaps of pieces. {I}. {B}asic definitions and combinatorial lemmas.
\newblock In {\em Combinatoire \'{e}num\'{e}rative ({M}ontreal, {Q}ue.,
  1985/{Q}uebec, {Q}ue., 1985)}, volume 1234 of {\em Lecture Notes in Math.},
  pages 321--350. Springer, Berlin, 1986.

\end{thebibliography}

\end{document}